\theoremstyle{plain}
\newtheorem{theorem}{Theorem}[section]
\newtheorem{lemma}[theorem]{Lemma}
\newtheorem{corollary}[theorem]{Corollary}
\title{Relating log-tangent integrals with the Riemann zeta function}
\author{Lahoucine Elaissaoui\footnote{Corresponding author, Mohammed V University in Rabat, Faculty of sciences, Morocco; \texttt{lahoumaths@gmail.com}} \and Zine El-Abidine Guennoun\footnote{Mohammed V University in Rabat, Faculty of sciences, Morocco; \texttt{guennoun@fsr.ac.ma}}}
\begin{document}
\maketitle
\begin{abstract}
We show that integrals involving log-tangent function, with respect to certain square-integrable functions on $(0, \pi/2)$, can be evaluated by some series involving the harmonic number. Then we use this result to establish many closed forms relating to the Riemann zeta function at odd positive integers. In addition, we show that the log-tangent integral with respect to the Hurwitz zeta function defines a meromorphic function and that its values depend on the Dirichlet series $\zeta_h(s) :=\sum_{n = 1}^\infty h_n n^{-s}$, where $h_n = \sum_{k=1}^n(2k-1)^{-1}$.
\end{abstract}

\textbf{Keywords.} Riemann zeta function \quad Hurwitz zeta function \quad Ap\'{e}ry's constant \quad Dirichlet series \quad Log-tangent integrals \quad Harmonic series.

\textbf{2000 MSC.} 42A10 \quad 42A70 \quad 11M06 \quad 26D15 \quad11L03

\section{Introduction}	

The theory of Riemann zeta function is a fascinating topic in number theory with many beautiful results and open questions. The Riemann zeta function is  defined by $$\zeta(s) := \sum_{n = 1}^\infty \frac{1}{n^s}$$ for any complex complex number $s$ in the half-plane $\Re s > 1$ and by analytic continuation on the whole complex plane, except for the simple pole at $s=1$. The values of $\zeta(s)$ at odd positive integers still present a mystery, except for $\zeta(3)$ which was established in \cite{Ape} to be an irrational number. Moreover, Rivoal~\cite{Riv} and Zudilin~\cite{Zud} showed, respectively, that infinitely many of the numbers $\zeta(2n+1)$ ($n=1,2,\dots$) must be irrational, and that at least one of the eight numbers $\zeta(2n+1)$ ($n=2,\dots, 9$) must be irrational. In the same direction and in order to study some algebraic properties of the set $\{\zeta(2n+1), n \in \mathbb{N}\}$, the current authors \cite{Ela} showed recently that these numbers appear, both explicitly and implicitly, in the value of integrals involving log-tangent function. More precisely, we showed that, for any square-integrable function $f$ on $(0, \pi/2)$, the integral
$$ L(f) :=\int_0^{\frac{\pi}{2}} f(x) \log( \tan x ) \mathrm{d} x$$
can be approximated by a finite sum involving the Riemann zeta function at odd positive integers. In particular, for any polynomial $P$, we have \cite[Theorem 2]{Ela}
\begin{equation}
 \int_0^{\frac{\pi}{2}}P\left( \frac{2}{\pi} x\right) \log ( \tan x)\mathrm{d}x = \sum_{k=1}^{\lfloor \frac{\deg P +1}{2} \rfloor} \frac{(-1)^{k-1}}{\pi^{2k-1}}c_k(P) \zeta(2k+1),
\label{L(P)}
\end{equation}
where $$c_k(P) = \left( 1 - \frac{1}{2^{2k+1}}\right)\left(P^{(2k-1)}(1) + P^{(2k-1)}(0)\right)$$
and where $P^{(k)}(x)$ denotes the $k$th derivative of $P$ at point $x.$

Besides the theory of numbers, integrals involving log-tangent function have various important applications in many different fields of mathematics. In physics, logarithmic-trigonometric integrals also have some applications in the evaluation of classical, semi-classical and quantum entropies of position and momentum, see for example \cite{Ruiz}.

The purpose of this paper is to show that the integrals involving log-tangent function can be evaluated by some series regarding the harmonic number. That is, for certain square-integrable functions $f$ on $(0 , \pi/2),$ we have
$$L(f) = -\sum_{n = 1}^\infty b_n(f) \frac{h_n}{n}, $$
where
$$b_n(f) := \langle f , \sin(4n\cdot)\rangle = \frac{4}{\pi} \int_0^{\frac{\pi}{2}}f(x) \sin(4nx) \mathrm{d} x \qquad (n \in \mathbb{N}) $$
and where
$$h_n := \sum_{k=1}^n \frac{1}{2k-1} = H_{2n} - \frac{1}{2} H_n \qquad (n \in \mathbb{N}).$$
Here, $H_n := \sum_{k=1}^n \frac{1}{k}$ is the so-called harmonic number. Furthermore, by combining this result with those obtained in \cite{Ela}, we will be able to express many different sums involving the Riemann zeta-function at odd integer in terms of variant Euler sums,
\begin{equation}
\sum_{k = 1}^\infty \alpha_k \frac{h_k}{k},
\label{Eulsum}
\end{equation}
where $(\alpha_k)_{k= 1}^\infty$ is a real sequence. These mentioned results will appear in Section \ref{Section1}. It is worth noting that Chen~\cite{Chen} studied the Euler series \eqref{Eulsum}, when $(\alpha_k)_{k=1}^\infty$ is a power sequences, by utilizing the following generating function
\begin{equation}
\sum_{k = 1}^\infty \frac{h_k}{k}x^{2k} = \frac{1}{4} \log^2 \left( \frac{1+x}{1-x} \right) \qquad (|x|<1) .
\label{GF}
\end{equation}
In Section 3, we provide a general study of the Dirichlet series $\zeta_h(s)$, or the $h$-zeta function, defined by
$$\zeta_h(s) := \sum_{n=1}^{\infty} \frac{h_n}{n^s} \qquad (\Re s > 1) .$$
Finally, we conclude the paper with a brief discussion of the corresponding results and remarks.


\section{Recursive formulas involving zeta numbers}
\label{Section1}

It is well known that the Fourier system $\left\{1/\sqrt{2},\cos(4nx) , \sin(4nx)\right\}_{n =1}^\infty$ forms an orthonormal basis for the Hilbert space $\mathrm{L}^2([0, \pi/2])$ with respect to the inner product
$$\langle f , g \rangle = \frac{4}{\pi} \int_0^{\frac{\pi}{2}} f(x) g(x) \mathrm{d}x \qquad \left(f,g \in \mathrm{L}^2 \left(\left[0 , \frac{\pi}{2} \right]\right)\right) .$$
It follows that any square-integrable function $f$ on $[0, \pi/2]$ can be expressed as the Fourier series
\begin{equation}
f(x) = \frac{1}{\sqrt{2}}a_0(f) + \sum_{n= 1}^\infty a_n(f) \cos(4nx) + \sum_{n= 1}^\infty b_n(f) \sin(4nx),
\label{Expf}
\end{equation}
where $a_0(f) := \langle f , 1/\sqrt{2}\rangle$ , $a_n(f) := \langle f , \cos(4n \cdot)\rangle$ and $ b_n(f) := \langle f , \sin(4n \cdot)\rangle$.

Let
$$H_{\text{odd}}:= \left\{ f \in L^2\left(\left[ 0 , \frac{\pi}{2} \right]\right)\colon  f\left( \frac{\pi}{2}-x \right) = - f(x) \ \text{almost everywhere}\right\} $$
and
$$ H_{\text{even}}:= \left\{ f \in L^2\left(\left[ 0 , \frac{\pi}{2} \right]\right)\colon f\left( \frac{\pi}{2}-x \right) =  f(x) \ \text{almost everywhere} \right\}.$$
It is not hard to check that $H_{\text{even}}$ is the orthogonal complement subspace of the closed subspace $H_{\text{odd}}$ in $\mathrm{L}^2( [0 , \pi/2] )$ and that the system $\{ \sin(4nx) \}_{n= 1}^\infty$ forms an orthonormal basis for $H_{\text{odd}}$.

Recall that, for any square-integrable function $f$ on $(0,\pi/2),$
$$L(f):= \int_{0}^{\frac{\pi}{2}} f(x)\log(\tan x)\mathrm{d}x.$$
The function $x \mapsto \log(\tan x)$ belongs to $H_{\text{odd}}$ and the operator $L$ is a linear functional on $\mathrm{L}^2([0, \pi/2]),$ so we have
$$L\left(\mathrm{L}^2\left(\left[0 , \frac{\pi}{2}\right] \right)\right) = L\left(H_{\text{odd}} \right) = \mathbb{R}.$$
Therefore, it suffices to study the linear functional $L$ on the Hilbert space $H_{\text{odd}}$. Clearly, if $f \in H_{\text{odd}}$, then the Fourier expansion of $f$ is reduced to
$$f(x) = \sum_{n = 1}^\infty b_n(f) \sin(4nx). $$
It follows immediately that
$$L(f) = \sum_{n = 1}^\infty b_n(f) L(\sin(4n \cdot))$$
for any $f\in H_{\text{odd}}$.

\begin{lemma}
For any positive integer $n,$ we have
$$\int_0^{\frac{\pi}{2}}\sin(4nx) \log ( \tan x ) \mathrm{d}x = - \frac{h_n}{n}.$$
\label{Lm1}\end{lemma}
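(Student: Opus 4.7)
My approach is to integrate by parts, using an antiderivative of $\sin(4nx)$ tailored to kill the logarithmic singularities of $\log(\tan x)$ at both endpoints. Since $\cos(4n\cdot 0) = \cos(2n\pi) = 1$, the function $v(x) := -(\cos(4nx)-1)/(4n)$ is an antiderivative of $\sin(4nx)$ that vanishes to second order at both $x=0$ and $x=\pi/2$, and therefore dominates $\log\tan x$ at the endpoints. Combined with $(\log\tan x)' = 2/\sin(2x)$, integration by parts yields
\[
\int_0^{\pi/2}\sin(4nx)\log(\tan x)\,dx \;=\; \frac{1}{2n}\int_0^{\pi/2}\frac{\cos(4nx)-1}{\sin(2x)}\,dx.
\]

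After the substitution $u=2x$ and the identity $\cos(2nu)-1 = -2\sin^2(nu)$, the right-hand side becomes $-\frac{1}{2n}\int_0^\pi \sin^2(nu)/\sin u\,du$. The key closed-form step is the telescoping identity
\[
\frac{\sin^2(nu)}{\sin u} \;=\; \sum_{k=1}^n \sin\bigl((2k-1)u\bigr),
\]
which follows by applying the product-to-sum formula $2\sin u\sin((2k-1)u) = \cos((2k-2)u) - \cos(2ku)$ to the right-hand side, telescoping to $1-\cos(2nu) = 2\sin^2(nu)$, and dividing by $2\sin u$. Integrating term by term on $(0,\pi)$ gives $\int_0^\pi \sin((2k-1)u)\,du = 2/(2k-1)$, so summing over $k=1,\dots,n$ yields $\int_0^\pi \sin^2(nu)/\sin u\,du = 2h_n$, and the whole expression collapses to $-h_n/n$, as claimed.

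The only delicate point is the very first step: the naïve antiderivative $-\cos(4nx)/(4n)$ produces divergent boundary contributions because $\log\tan x$ blows up at both $0$ and $\pi/2$, so one must absorb the additive constant $1/(4n)$ into $v$ to gain the $O(x^2)$ and $O((\pi/2-x)^2)$ vanishing that tames the logarithms. An alternative route, more in the spirit of the Hilbert-space setup preceding the lemma, would be to expand the Fourier series $\log(\tan x) = -2\sum_{k\ge 0}\frac{\cos((4k+2)x)}{2k+1}$ and integrate term by term; this is viable but leads to a triple partial-fraction decomposition whose three component series diverge individually, requiring a careful cancellation of logarithmic tails in the partial sums to recover the $h_n/(4n^2)$ residual. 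The IBP route avoids this bookkeeping entirely.
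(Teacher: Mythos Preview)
Your proof is correct and takes a genuinely different route from the paper. The paper expands $\log(\tan x)=-2\sum_{k\ge 0}\frac{\cos((4k+2)x)}{2k+1}$, integrates term by term against $\sin(4nx)$, and then resolves the resulting bilateral series by partial fractions into two digamma sums, invoking the identities $\psi(\tfrac12+n)=\psi(\tfrac12)+2h_n$ and $\psi(\tfrac12-n)=\psi(\tfrac12+n)$. You instead integrate by parts with the constant-shifted antiderivative $v(x)=-(\cos(4nx)-1)/(4n)$, reduce to a finite trigonometric sum via the telescoping identity $\sin^2(nu)/\sin u=\sum_{k=1}^n\sin((2k-1)u)$, and evaluate each term directly. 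Your argument is more elementary and entirely self-contained---no special functions, no infinite-series manipulations, no external references---while the paper's argument fits more naturally into the Fourier/Hilbert-space framework that drives the rest of Section~\ref{Section1}. Your parenthetical comment about the Fourier-series route leading to individually divergent series and an ``$h_n/(4n^2)$ residual'' is a bit garbled (the paper's regrouping gives two convergent digamma series and the value $-h_n/n$ directly), but that aside does not affect the validity of your main argument.
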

\begin{proof}
 By using the Fourier expansion of log-tan function from \cite[Th. 1]{Bra}, namely
$$\log(\tan x ) = -2 \sum_{k = 0}^\infty \frac{\cos(2(2k+1)x)}{2k+1} \qquad \left(x \in \left(0, \frac{\pi}{2} \right)\right),$$
and the fact that
$$\int_0^{\frac{\pi}{2}}\cos(2(2k+1)x)\sin(4nx) \mathrm{d}x = \frac{1}{2} \left( \frac{1}{2k+1+2n} - \frac{1}{2k+1-2n} \right),$$
we have
\begin{align*}
\int_0^{\frac{\pi}{2}}\sin(4nx) \log ( \tan x ) \mathrm{d}x &=  - \sum_{k = 0}^\infty \frac{1}{2k+1}\left( \frac{1}{2k+1+2n} - \frac{1}{2k+1-2n}\right) \\ &= -\frac{1}{2n} \sum_{k = 0}^\infty\left( \frac{1}{2k+1} - \frac{1}{2k+1+2n}\right) \\& \qquad \qquad - \frac{1}{2n} \sum_{k = 0}^\infty \left( \frac{1}{2k+1} - \frac{1}{2k+1-2n}\right)\\ &= -\frac{1}{4n}\left( \psi\left( \frac{1}{2} + n \right) + \psi\left( \frac{1}{2}-n\right) - 2\psi\left(\frac{1}{2}\right) \right),
\end{align*}
where $\psi$ is the digamma-function defined by
$$\psi(x) := -\frac{1}{x}- \gamma + \sum_{k \geq 1}\left( \frac{1}{k} - \frac{1}{x+k} \right) \qquad (x >0) $$
with the Euler-Mascheroni constant $\gamma.$
Since, for each $n \in \mathbb{N}$, we have
$$\psi\left( n + \frac{1}{2}\right) = \psi\left(\frac{1}{2}\right) + 2 h_n \qquad\mbox{and}\qquad \psi\left( \frac{1}{2} - n\right) = \psi\left( \frac{1}{2} + n \right), $$
it follows that
$$\int_0^{\frac{\pi}{2}}\sin(4nx) \log ( \tan x ) \mathrm{d}x = - \frac{h_n}{n}, $$
as required.
\end{proof}

Hence, we are ready to state our main theorem.

\begin{theorem}
The equality
 $$\log( \tan x) =  - \frac{4}{\pi}\sum_{n =1}^\infty \frac{h_n}{n} \sin(4nx) $$
 holds in $\mathrm{L}^2([0 , \pi/2])$ and, for any $f \in H_{\text{odd}},$ we have
 $$L(f) = - \sum_{n= 1}^\infty b_n( f ) \frac{h_n}{n},$$
 where $b_n(f):=\langle f,\sin(4n\cdot)\rangle.$
 \label{Th1}
 \end{theorem}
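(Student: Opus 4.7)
The plan is to exploit the Hilbert space structure that the paper has already set up: the subspace $H_{\text{odd}}$ of $\mathrm{L}^2([0,\pi/2])$ with its orthonormal basis $\{\sin(4n\cdot)\}_{n\geq 1}$, together with the values computed in Lemma \ref{Lm1}.

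First, I would verify the membership $\log(\tan\cdot)\in H_{\text{odd}}$. The function is square-integrable on $(0,\pi/2)$ because near $0$ it behaves like $\log x$ and near $\pi/2$ like $-\log(\pi/2-x)$, both of which lie in $\mathrm{L}^2$; and the substitution $x\mapsto \pi/2-x$ turns $\log(\tan x)$ into $\log(\cot x)=-\log(\tan x)$, so the antisymmetry about $\pi/4$ is immediate. Hence $\log(\tan\cdot)$ admits a convergent Fourier expansion of the form $\sum_{n\geq 1} b_n\sin(4nx)$ in $\mathrm{L}^2([0,\pi/2])$, with coefficients
\[
b_n=\langle \log(\tan\cdot),\sin(4n\cdot)\rangle=\frac{4}{\pi}\int_0^{\pi/2}\sin(4nx)\log(\tan x)\,\mathrm{d}x.
\]
By Lemma \ref{Lm1}, this equals $-\frac{4}{\pi}\cdot \frac{h_n}{n}$, which gives the first identity of the theorem in the $\mathrm{L}^2$ sense.

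For the second statement, I would invoke Parseval's identity on the closed subspace $H_{\text{odd}}$. Given any $f\in H_{\text{odd}}$ with Fourier coefficients $b_n(f)=\langle f,\sin(4n\cdot)\rangle$, and knowing that the Fourier coefficients of $\log(\tan\cdot)$ are $-\tfrac{4}{\pi}h_n/n$, Parseval yields
\[
\langle f,\log(\tan\cdot)\rangle=-\frac{4}{\pi}\sum_{n=1}^{\infty}b_n(f)\,\frac{h_n}{n}.
\]
Since $L(f)=\int_0^{\pi/2}f(x)\log(\tan x)\,\mathrm{d}x=\tfrac{\pi}{4}\langle f,\log(\tan\cdot)\rangle$, the factors of $4/\pi$ cancel and one obtains $L(f)=-\sum_{n\geq 1}b_n(f)\,h_n/n$, as claimed. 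The extension from $H_{\text{odd}}$ to all of $\mathrm{L}^2([0,\pi/2])$ is automatic from the orthogonal decomposition $\mathrm{L}^2=H_{\text{odd}}\oplus H_{\text{even}}$ observed just before the theorem, together with the fact that $L$ vanishes on $H_{\text{even}}$ (its elements are orthogonal to the odd function $\log\tan$).

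There is no genuine obstacle here; the only delicate point is checking the $\mathrm{L}^2$ integrability of $\log(\tan\cdot)$ so that the machinery of Fourier series in $\mathrm{L}^2$ and Parseval's identity applies without issue. Everything else is a direct reading-off of the Fourier coefficients supplied by Lemma \ref{Lm1}.
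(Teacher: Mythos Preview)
Your proposal is correct and follows essentially the same route as the paper. The paper does not give a separate proof block for Theorem~\ref{Th1}; instead, just before the statement it expands $f\in H_{\text{odd}}$ in the basis $\{\sin(4n\cdot)\}$, writes $L(f)=\sum_n b_n(f)\,L(\sin(4n\cdot))$ by continuity of $L$, and then invokes Lemma~\ref{Lm1}---which is exactly your argument, with your use of Parseval making the justification a touch more explicit.
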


Specializing Theorem~\ref{Th1} for $f(x) = \log(\tan x)$, we obtain
$$L(f) = \int_0^{\frac{\pi}{2}}\log^2( \tan x ) \mathrm{d}x = \frac{4}{\pi}\sum_{n = 1}^\infty\frac{h_n^2}{n^2}. $$
We also know that
$$ \int_0^{\frac{\pi}{2}}\log^2( \tan x ) \mathrm{d}x = \frac{\pi^3}{8}.$$
Therefore, we must have
$$\sum_{n = 1}^\infty \frac{h_n^2}{n^2} = \frac{\pi^4}{32} .$$
Note that this last formula can also be obtained by applying Parseval's identity.

Now let
$$f(x) = \begin{cases}\begin{array}{cl} - 1/2 &\quad\mbox{if}\quad 0 \leq x < \pi/4, \\  1/2 &\quad\mbox{if}\quad \pi/4 < x \leq \pi/2, \\ 0 &\quad\mbox{if}\quad  x=\pi/4.\end{array}\end{cases} $$
It is clear that $f$ belongs to $H_{\text{odd}}$, so we have
$$f(x) = \sum_{n=1}^\infty b_n(f) \sin(4nx) = \frac{1}{\pi} \sum_{n = 1}^\infty \frac{(-1)^n-1}{n} \sin(4nx). $$
By Theorem~\ref{Th1}, we obtain
$$L(f) = - \frac{1}{\pi} \sum_{n = 1}^\infty ((-1)^n - 1)\frac{h_n}{n^2} = \frac{2}{\pi}\sum_{n = 0}^\infty\frac{h_{2n+1}}{(2n+1)^2}= G,$$
where $G$ denotes Catalan's constant. From \cite[eq. (19) p. 5]{Chen}, we know that
\begin{equation}
\sum_{n = 1}^\infty \frac{h_n}{n^2} = \frac{7}{4} \zeta(3).
\label{Che}
\end{equation}
It now follows that
$$\sum_{n = 1}^\infty (-1)^n \frac{h_n}{n^2} = \frac{7}{4}\zeta(3) - \pi G \  \qquad \mbox{and} \qquad \sum_{n = 1}^\infty \frac{h_{2n}}{n^2} = \frac{7}{4} \zeta(3) - \frac{\pi}{2} G. $$

More generally, we have the following result.

\begin{corollary}
Let $r \in [0, 1/4]$. Then we have
$$\int_0^{r\pi} \log( \tan x) \mathrm{d}x =  -\frac{7}{4\pi}\zeta(3) + \frac{1}{\pi} \sum_{n = 1}^\infty \frac{h_n}{n^2}\cos( 4nr\pi). $$
\label{coro1}
\end{corollary}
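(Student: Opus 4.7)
The plan is to apply Theorem~\ref{Th1} directly to the Fourier expansion of $\log(\tan x)$ and then integrate term by term over $[0, r\pi]$. Since $r \le 1/4$, this interval is contained in $[0,\pi/2]$, so the expansion is valid on the region of integration and we start from
$$\log(\tan x) = -\frac{4}{\pi}\sum_{n=1}^\infty \frac{h_n}{n}\sin(4nx)$$
viewed as an identity in $\mathrm{L}^2([0,\pi/2])$.

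Next, I would justify exchanging the sum with the integral. Since $\chi_{[0, r\pi]} \in \mathrm{L}^2([0,\pi/2])$, the integral $\int_0^{r\pi} \log(\tan x)\, \mathrm{d}x$ coincides (up to the normalizing factor $\pi/4$) with the inner product $\langle \log(\tan\cdot),\chi_{[0,r\pi]}\rangle$, and the continuity of this inner product on $\mathrm{L}^2$ makes term-by-term integration legitimate. A direct computation then gives
$$\int_0^{r\pi}\sin(4nx)\, \mathrm{d}x = \frac{1-\cos(4nr\pi)}{4n},$$
so that
$$\int_0^{r\pi}\log(\tan x)\, \mathrm{d}x = -\frac{1}{\pi}\sum_{n=1}^\infty \frac{h_n}{n^2}\bigl(1-\cos(4nr\pi)\bigr).$$

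Finally, I would split the resulting series into two pieces and apply the known evaluation \eqref{Che}, namely $\sum_{n=1}^\infty h_n/n^2 = \tfrac{7}{4}\zeta(3)$, to obtain
$$\int_0^{r\pi}\log(\tan x)\, \mathrm{d}x = -\frac{7}{4\pi}\zeta(3) + \frac{1}{\pi}\sum_{n=1}^\infty \frac{h_n}{n^2}\cos(4nr\pi),$$
as claimed. The one non-mechanical step is the interchange of sum and integral; everything else is bookkeeping. Since the convergence of the Fourier series of $\log(\tan x)$ is only guaranteed in the $\mathrm{L}^2$ sense (not uniformly, due to the singularities at $0$ and $\pi/2$), pairing with the bounded indicator $\chi_{[0,r\pi]}$ via the inner product is the cleanest justification; the constraint $r\le 1/4$ keeps us away from the right endpoint and ensures that $r\pi \in [0,\pi/2]$, which is all that is needed.
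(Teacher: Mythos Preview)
Your proof is correct and essentially coincides with the paper's: both compute $\int_0^{r\pi}\sin(4nx)\,\mathrm{d}x=(1-\cos(4nr\pi))/(4n)$ and then invoke \eqref{Che}. The only cosmetic difference is that the paper first symmetrizes the indicator into an explicit odd function $f_r\in H_{\text{odd}}$ (taking values $\mp 1/2$ on $[0,r\pi)$ and $((1/2-r)\pi,\pi/2]$) and applies the second formula of Theorem~\ref{Th1} to it, whereas you pair the $L^2$-Fourier expansion of $\log(\tan x)$ directly with $\chi_{[0,r\pi]}$; since $\log(\tan\cdot)\in H_{\text{odd}}$ these are the same computation.
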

\begin{proof}
 Let
$$f_r(x) = \begin{cases}\begin{array}{cl} - 1/2 &\quad\mbox{if}\quad 0 \leq x < r\pi, \\  0 & \quad\mbox{if}\quad r\pi \leq  x \leq ( 1/2 - r )\pi, \\ 1/2 &\quad\mbox{if}\quad  (1/2 - r )\pi < x \leq \pi/2.\end{array}\end{cases} $$
Clearly, $f_r$ belongs to $H_{\text{odd}}$. Since, for each $n \in \mathbb{N},$ we have
\begin{align*}
b_n(f) &= \frac{4}{\pi} \int_0^{\frac{\pi}{2}} f_r(x) \sin(4nx) \mathrm{d}x \\ &= - \frac{4}{\pi}\int_0^{r\pi}\sin(4nx) \mathrm{d}x \\ &=  \frac{1}{\pi}\left(\frac{\cos(4nr\pi) - 1}{n}\right),
\end{align*}
it follows from Theorem \ref{Th1} and \eqref{Che} that
\begin{align*}
L(f_r) &= - \frac{1}{\pi} \sum_{n = 1}^\infty\frac{h_n}{n^2}( \cos(4nr\pi) - 1 )  \\ &= \frac{1}{\pi}\sum_{n = 1}^\infty \frac{h_n}{n^2} - \frac{1}{\pi}\sum_{n = 1}^\infty \frac{h_n}{n^2}\cos(4nr\pi) \\ &= \frac{7}{4\pi}\zeta(3) - \frac{1}{\pi} \sum_{n = 1}^\infty\frac{h_n}{n^2}\cos(4nr\pi).
\end{align*}
Finally, the fact that
$$L(f_r) = -\int_0^{\pi r} \log(\tan x ) \mathrm{d}x \qquad \left( r \in \left[0, \frac{1}{4}\right]\right) $$
completes the proof of Corollary \ref{coro1}.
\end{proof}

We should mention that Bradley~\cite{Bra} studied the transformation
$$T(r) := \int_0^{r \pi} \log( \tan x)\mathrm{d}x, $$
for all $r \in [0, 1/2].$ He showed that $$T\left( \frac{1}{2} - r \right) = T(r) \qquad \left( r \in \left[0, \frac12\right]\right), $$
which can also be deduced from our Corollary \ref{coro1}. Moreover, by using the identities listed in \cite[p. 172]{Bra},  we can obtain several values of sums of the form
$$\sum_{n = 1}^\infty \frac{h_n}{n^2}\left(\sum_{r_i}k_i\cos(4nr_i\pi)\right) $$
for some rational numbers $r_i \in [0 , 1/4]$ and for some integers $k_i$. For example, by using the identity \cite[A. 23]{Bra}, we obtain
$$5T\left( \frac{3}{20} \right) = 5 T\left( \frac{1}{20}\right) + 2T\left( \frac{1}{4} \right), $$
and we can arrive at an alternative representation of the Ap\'{e}ry's constant
$$\zeta(3) = \frac{2}{7} \sum_{n = 1}^\infty \frac{h_n}{n^2}\left(5\cos\left( n \frac{\pi}{5}\right) - 5\cos \left(n \frac{3\pi}{5} \right) + 2(-1)^n\right) .$$

Another interesting Euler sum is given in the following corollary.

\begin{corollary}
Let $m$ be a positive integer. Then we have
$$\zeta_h(2m) =-\frac{1}{2} \sum_{k=1}^{m} \left(2^{2k+1} - 1\right) \zeta(2m-2k) \zeta(2k+1), $$
where $\zeta_h(s):= \sum_{n=1}^\infty h_n/n^s$ $(\Re s>1)$ and where $\zeta(0) = - 1/2$.
\label{coro2}
\end{corollary}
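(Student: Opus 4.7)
The plan is to apply Theorem~\ref{Th1} to a function in $H_{\text{odd}}$ whose Fourier sine coefficients decay precisely like $n^{-(2m-1)}$, so that $-\sum b_n(f)\,h_n/n$ becomes a scalar multiple of $\zeta_h(2m)$. The natural candidate is the odd-degree Bernoulli polynomial: I would set $f(x):=B_{2m-1}(2x/\pi)$. The reflection identity $B_n(1-t)=(-1)^n B_n(t)$ gives $f\in H_{\text{odd}}$, and the classical Fourier expansion
$$B_{2m-1}(t)=\frac{(-1)^{m}\,2(2m-1)!}{(2\pi)^{2m-1}}\sum_{n=1}^{\infty}\frac{\sin(2\pi n t)}{n^{2m-1}}\qquad (0\le t\le 1)$$
identifies $b_n(f)$ directly, and Theorem~\ref{Th1} then yields
$$L(f)=\frac{(-1)^{m+1}\,2(2m-1)!}{(2\pi)^{2m-1}}\,\zeta_h(2m).$$

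Next, I would evaluate the same $L(f)$ using \eqref{L(P)} with $P=B_{2m-1}$, a polynomial of degree $2m-1$ whose upper index in \eqref{L(P)} is exactly $m$. Using $B_{2m-1}^{(2k-1)}=\tfrac{(2m-1)!}{(2m-2k)!}\,B_{2m-2k}$ together with $B_n(0)=B_n(1)=B_n$ for $n\neq 1$, the coefficient $c_k(B_{2m-1})$ collapses to
$$c_k(B_{2m-1})=\bigl(1-2^{-(2k+1)}\bigr)\,\frac{2(2m-1)!}{(2m-2k)!}\,B_{2m-2k}.$$
For $1\le k\le m-1$, I invoke Euler's formula $B_{2n}=2(-1)^{n+1}(2n)!\,(2\pi)^{-2n}\,\zeta(2n)$ to rewrite this purely in terms of $\zeta(2m-2k)$. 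For $k=m$, the factor $B_0=1$ produces an isolated $\zeta(2m+1)$ without a companion zeta, and this end term is precisely the $\zeta(0)=-\tfrac12$ specialisation of the general summand, which is exactly why the stated convention appears in the statement.

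Combining these pieces, the factors $2^{2k+1}\cdot 2^{2m-2k}$ and $\pi^{2k-1}\cdot\pi^{2m-2k}$ consolidate into a single $(2\pi)^{2m-1}$, while the three independent sign factors --- the $(-1)^{k-1}$ from \eqref{L(P)}, the $(-1)^{m-k+1}$ from Euler's formula, and the global $(-1)^{m}$ from the Bernoulli Fourier expansion --- multiply to a $k$-independent $(-1)^m$. One thus obtains
$$L(f)=\frac{(-1)^m(2m-1)!}{(2\pi)^{2m-1}}\sum_{k=1}^{m}\bigl(2^{2k+1}-1\bigr)\,\zeta(2m-2k)\,\zeta(2k+1),$$
and equating the two expressions for $L(f)$ cancels the prefactor $(-1)^m(2m-1)!/(2\pi)^{2m-1}$ and, after dividing by $-2$, yields the claimed identity.

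The main obstacle I foresee is purely bookkeeping: the three distinct $(-1)$-factors must combine coherently, and the $k=m$ boundary term must be recognised as the $\zeta(0)=-\tfrac12$ specialisation of the general summand rather than tracked separately. A quick sanity check at $m=1$, where the formula reduces to $\zeta_h(2)=\tfrac74\zeta(3)$ of \eqref{Che}, confirms that the sign conventions collapse correctly.
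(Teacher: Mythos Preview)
Your proposal is correct and follows essentially the same route as the paper's own proof: take $f(x)=B_{2m-1}(2x/\pi)$, compute $L(f)$ once via Theorem~\ref{Th1} using the classical Fourier expansion of $B_{2m-1}$, compute it a second time via \eqref{L(P)} using $B_{2m-1}^{(2k-1)}(0)=\tfrac{(2m-1)!}{(2m-2k)!}B_{2m-2k}$ together with Euler's formula for $B_{2m-2k}$, and equate. The only minor slip is expository: in your list of ``three independent sign factors'' the $(-1)^m$ coming from the Bernoulli Fourier expansion belongs to the Theorem~\ref{Th1} side, not to the \eqref{L(P)} side; the $(-1)^m$ appearing in your displayed expression for $L(f)$ via \eqref{L(P)} is simply $(-1)^{k-1}\cdot(-1)^{m-k+1}$.
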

\begin{proof}
From \cite[eq. (1.3)]{LRV}, we know that, for any positive integer $m$, the ($2m-1$)th Bernoulli polynomial can be expanded as
$$B_{2m-1}(x) = \frac{2(-1)^m(2m-1)!}{(2\pi)^{2m-1}} \sum_{n = 1}^\infty \frac{1}{n^{2m-1}}\sin(2\pi nx) \qquad (0\leq x<1).$$
In other words, we have, for all $x \in [0, \pi/2 ),$
$$B_{2m-1}\left( \frac{2}{\pi}x\right)= \frac{2(-1)^m(2m-1)!}{(2\pi)^{2m-1}} \sum_{n = 1}^\infty \frac{1}{n^{2m-1}}\sin(4nx). $$
By applying Theorem \ref{Th1}, we obtain
\begin{equation}
\int_0^{\frac{\pi}{2}}B_{2m-1}\left( \frac{2}{\pi}x\right) \log( \tan x )  \mathrm{d}x = \frac{2(-1)^{m-1}(2m-1)!}{(2\pi)^{2m-1}} \sum_{n = 1}^\infty \frac{h_n}{n^{2m}}.
\label{Brn}
\end{equation}
On the other hand, we find from  \eqref{L(P)} that
$$\int_0^{\frac{\pi}{2}}B_{2m-1}\left( \frac{2}{\pi}x\right) \log( \tan x )  \mathrm{d}x = 2 \sum_{k=1}^m\frac{(-1)^{k-1}}{\pi^{2k-1}}B_{2m-1}^{(2k-1)}(0) \left(1 - \frac{1}{2^{2k+1}} \right)\zeta(2k+1).$$
Note that
$$B_{2m-1}^{(2k-1)}(0) = \frac{(2m-1)!}{(2m-2k)!}B_{2m-2k},$$
where $B_{2m-2k}$ is the ($2m-2k$)th Bernoulli number
$$B_{2m-2k} = - \frac{2(2m-2k)!(-1)^{m-k}}{(2\pi)^{2m-2k}}\zeta(2m-2k), $$
with $\zeta(0) = - 1/2$. Now we have
$$ \int_0^{\frac{\pi}{2}}B_{2m-1}\left( \frac{2}{\pi}x\right) \log( \tan x )  \mathrm{d}x = \frac{(-1)^m(2m-1)!}{(2\pi)^{2m-1}}\sum_{k=1}^m \left( 2^{2k+1}-1 \right) \zeta(2m-2k)\zeta(2k+1) .$$
Thus, by combining this last equation with \eqref{Brn}, we obtain
$$ \sum_{n = 1}^\infty \frac{h_n}{n^{2m}} = - \frac{1}{2} \sum_{k=1}^m \left( 2^{2k+1}-1 \right) \zeta(2m-2k)\zeta(2k+1),$$
as required.
\end{proof}

It is worth noting that Corollary \ref{coro2} can be rewritten as
$$\sum_{n = 1}^\infty\frac{h_n}{n^{2m}} = \left(\frac{2^{2m+1}-1}{4}\right) \zeta(2m+1) -\frac{1}{2} \sum_{k=1}^{m-1} \left(2^{2k+1} - 1\right) \zeta(2m-2k) \zeta(2k+1) \qquad (m\geq 2).$$
It is not hard to see that our Corollary \ref{coro2} is a generalization of Chen's formula \eqref{Che}.

For some historical background of this kind of recursive formulas, it is worth mentioning that Euler~\cite{Eul} proved the following formula
$$2 \sum_{n = 1}^\infty \frac{H_n}{n^m} = (m+2)\zeta(m+1) - \sum_{k=1}^{m-2}\zeta(k+1)\zeta(m-k) \qquad (m \geq 2),$$
In addition, Georghiou and Philippou~\cite{GerPhi} showed that
$$\sum_{n = 1}^\infty \frac{H_n}{n^{2m+1}} = \frac{1}{2}  \sum_{k=2}^{2m}(-1)^k\zeta(k)\zeta(2m+2-k) \qquad (m \in \mathbb{N}). $$
Another well-known recursive formula for zeta-function at even positive integers is given in \cite[p. 167]{ChoiSriv} as
$$(2n+1)\zeta(2n) = 2 \sum_{k=1}^{n-1}\zeta(2k)\zeta(2n-2k) \qquad (n\geq 2). $$
For more relevant explicit Euler sums, the reader is advised to see \cite{Bor1,Bor2}.

Next we show another explicit formulation of \eqref{L(P)}.

\begin{theorem}
For any polynomial $P$ such that $P(1-x) = - P(x),$ we have
$$\int_0^{\frac{\pi}{2}}P\left( \frac{2}{\pi} x \right) \log ( \tan x) \mathrm{d}x = 4 \sum_{k=1}^{\frac{\deg P +1}{2}} \frac{(-1)^{k}}{(2\pi)^{2k-1}}P^{(2k-2)}(0)\zeta_h(2k),$$
where $P^{(k)}(x)$ denotes the $k$th derivative of $P$ at point $x.$
\label{Th2}
\end{theorem}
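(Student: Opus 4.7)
The plan is to reduce the statement to Theorem~\ref{Th1} applied to the function $f(x) := P(2x/\pi)$, and then compute its Fourier sine coefficients explicitly via integration by parts.

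First I would verify that $f \in H_{\text{odd}}$. Indeed, $f(\pi/2 - x) = P(1 - 2x/\pi) = -P(2x/\pi) = -f(x)$ by the hypothesis on $P$. (In passing, note that setting $x = 1/2$ in $P(1-x) = -P(x)$ gives $P(1/2) = 0$, and writing $P$ as a polynomial in $(x-1/2)$ shows that $\deg P$ is necessarily odd, which is why $(\deg P+1)/2$ is the correct integer summation bound.) Thus Theorem~\ref{Th1} yields
$$\int_0^{\pi/2} P(2x/\pi) \log(\tan x)\, \mathrm{d}x = -\sum_{n=1}^\infty b_n(f) \frac{h_n}{n}.$$

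Next I would compute $b_n(f)$. Substituting $u = 2x/\pi$ gives
$$b_n(f) = \frac{4}{\pi} \int_0^{\pi/2} P(2x/\pi) \sin(4nx)\, \mathrm{d}x = 2\int_0^1 P(u) \sin(2\pi n u)\, \mathrm{d}u.$$
Repeated integration by parts produces alternating sine and cosine boundary terms; since $\sin(2\pi n u)$ vanishes at $u = 0$ and $u = 1$, only the cosine terms survive, giving
$$\int_0^1 P(u)\sin(2\pi n u)\, \mathrm{d}u = \sum_{j\geq 0} \frac{(-1)^{j+1}}{(2\pi n)^{2j+1}} \bigl[P^{(2j)}(1) - P^{(2j)}(0)\bigr].$$
Differentiating the relation $P(1-x) = -P(x)$ an even number of times and evaluating at $x = 0$ yields $P^{(2j)}(1) = -P^{(2j)}(0)$, so the bracket equals $-2 P^{(2j)}(0)$, and
$$b_n(f) = 4\sum_{j=0}^{(\deg P -1)/2} \frac{(-1)^j}{(2\pi n)^{2j+1}} P^{(2j)}(0).$$

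Finally I would substitute this into the Theorem~\ref{Th1} expansion and swap the order of summation. Absolute convergence is justified by the fact that $b_n(f) = O(n^{-1})$ (in fact $O(n^{-1})$ from the leading $j=0$ term, with the $h_n/n$ factor making the resulting double series absolutely convergent). This gives
$$\int_0^{\pi/2} P(2x/\pi)\log(\tan x)\, \mathrm{d}x = -4 \sum_{j=0}^{(\deg P -1)/2} \frac{(-1)^j P^{(2j)}(0)}{(2\pi)^{2j+1}} \sum_{n=1}^\infty \frac{h_n}{n^{2j+2}}.$$
Recognizing the inner sum as $\zeta_h(2j+2)$ and reindexing by $k = j+1$ produces exactly the claimed formula. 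The only non-routine step is the swap of summation, which is the point I would be most careful about; everything else is bookkeeping of signs and indices.
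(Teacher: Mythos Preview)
Your proposal is correct and follows essentially the same route as the paper: verify that $P(2x/\pi)\in H_{\text{odd}}$, compute its Fourier sine coefficients $b_n(f)$ by repeated integration by parts (the paper does this in the variable $x$, you in $u=2x/\pi$, which is immaterial), and then invoke Theorem~\ref{Th1}. One small remark: your caution about the summation swap is unnecessary, since the $j$-sum is finite; interchanging a finite sum with the convergent series $\sum_n h_n/n^{2j+2}$ is automatic.
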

\begin{proof}
 It is clear, from the functional equation of $P$, that $P$ is of odd degree, say $2m-1$ ($m \in \mathbb{N}$). Thus, by using integration by parts, it is easy to see that
$$\int_0^{\frac{\pi}{2}}P\left( \frac{2}{\pi} x  \right) \sin(4nx) \mathrm{d}x = \pi \sum_{k=1}^{m} \frac{(-1)^{k-1}}{(2n\pi)^{2k-1}}P^{(2k-2)}(0). $$
Since the polynomial $P\left(2/\pi \, x \right)$ is in $H_{\text{odd}},$ it follows that its expansion is
\begin{align*}
P\left( \frac{2}{\pi}x \right) &= 4 \sum_{n = 1}^\infty \left( \sum_{k=1}^{m} \frac{(-1)^{k-1}}{(2n\pi)^{2k-1}}P^{(2k-2)}(0)\right) \sin(4nx) \\ &= 4 \sum_{k=1}^m \frac{(-1)^{k-1}}{(2 \pi)^{2k-1}}P^{(2k-2)}(0) \sum_{n = 1}^\infty \frac{\sin(4nx)}{n^{2k-1}}.
\end{align*}
Finally, we apply Theorem \ref{Th1} to complete the proof.
\end{proof}

We note that another way to prove Theorem \ref{Th2} is by employing the following fact. Each polynomial $P$ of degree $2m-1$ such that $P(x)=-P(1-x)$ can be written as
 $$P(x) = \frac{2(-1)^m(2\pi)^{2m-1}}{(2m-1)!} \sum_{k=1}^m \frac{(-1)^{k-1}}{(2 \pi)^{2k-1}}P^{(2k-2)}(0) B_{2k-1}(x) \qquad (0\leq x \leq 1).$$
Then we apply \eqref{Brn} to prove Theorem \ref{Th2}.

Next if we take $P(x) = E_{2m-1}(x)$ to be the $(2m-1)$th Euler polynomial, we obtain the following inversion formula of Corollary \ref{coro2}.

\begin{corollary}
For any positive integer $m,$ we have
$$\zeta(2m+1) = \frac{8}{\pi^2} \frac{1}{2^{2m+1}-1} \sum_{k=1}^m \zeta_h(2k)\left(2^{2m-2k+2}-1 \right)\zeta(2m-2k+2). $$
\label{coro3}
\end{corollary}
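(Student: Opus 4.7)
The plan is to apply both Theorem~\ref{Th2} and formula~\eqref{L(P)} to the same integral, with $P(x) = E_{2m-1}(x)$ the $(2m-1)$th Euler polynomial, and then equate the two resulting expressions. First I would verify the hypothesis of Theorem~\ref{Th2}: the standard functional equation $E_n(1-x) = (-1)^n E_n(x)$ gives $E_{2m-1}(1-x) = -E_{2m-1}(x)$, so $P$ is admissible.

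The key observation that makes one side collapse is the following. Since $E_{2m-1}^{(2k-1)}(x) = \frac{(2m-1)!}{(2m-2k)!} E_{2m-2k}(x)$ and the identity $E_n(x)+E_n(x+1)=2x^n$ evaluated at $x=0$ gives $E_n(0)+E_n(1)=0$ for every $n\geq 1$, the coefficient $c_k(P)$ in~\eqref{L(P)} vanishes for all $k=1,\dots,m-1$. Only $k=m$ survives, since $E_0 \equiv 1$ and $E_0(0)+E_0(1)=2$. Hence~\eqref{L(P)} reduces to the single term
\[
\int_0^{\frac{\pi}{2}} E_{2m-1}\!\left(\tfrac{2}{\pi}x\right) \log(\tan x)\,\mathrm{d}x = \frac{(-1)^{m-1}}{\pi^{2m-1}}\cdot 2(2m-1)!\left(1-\frac{1}{2^{2m+1}}\right)\zeta(2m+1).
\]

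On the other side, I would evaluate Theorem~\ref{Th2} by using $E_{2m-1}^{(2k-2)}(0) = \frac{(2m-1)!}{(2m-2k+1)!} E_{2m-2k+1}(0)$, together with the two classical identities
\[
E_n(0) = -\frac{2(2^{n+1}-1)}{n+1} B_{n+1}, \qquad B_{2j} = \frac{2(-1)^{j+1}(2j)!}{(2\pi)^{2j}} \zeta(2j),
\]
applied with $n=2m-2k+1$ and $j=m-k+1$. This rewrites $E_{2m-1}^{(2k-2)}(0)$ as a multiple of $(2^{2m-2k+2}-1)\,\zeta(2m-2k+2)/(2\pi)^{2m-2k+2}$, and all the $k$-dependent powers of $2\pi$ telescope with the $(2\pi)^{2k-1}$ in the denominator of Theorem~\ref{Th2} to produce a uniform factor of $(2\pi)^{-(2m+1)}$ in every summand.

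Setting the two expressions for the integral equal and cancelling the common factor $(-1)^{m-1}(2m-1)!$, the remaining powers of $2$ and $\pi$ simplify to yield exactly
\[
\zeta(2m+1) = \frac{8}{\pi^2(2^{2m+1}-1)} \sum_{k=1}^m \zeta_h(2k)\bigl(2^{2m-2k+2}-1\bigr)\zeta(2m-2k+2),
\]
as claimed. The main obstacle is purely bookkeeping—tracking the signs and the powers of $2$ and $\pi$ through the Bernoulli/Euler conversion—while the conceptual content lies entirely in the vanishing identity $E_n(0)+E_n(1)=0$, which collapses~\eqref{L(P)} to a single $\zeta(2m+1)$ term and thus lets one invert the relation of Corollary~\ref{coro2}.
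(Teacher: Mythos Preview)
Your proposal is correct and follows essentially the same approach as the paper's proof: apply Theorem~\ref{Th2} to $P=E_{2m-1}$ to express the integral in terms of $\zeta_h(2k)$, apply~\eqref{L(P)} to the same integral to obtain a single $\zeta(2m+1)$ term, and equate. The only difference is that the paper quotes the single-term form of~\eqref{L(P)} directly from~\cite[Th.~1]{Ela}, whereas you supply the explanation via $E_n(0)+E_n(1)=0$ for $n\geq 1$; this is a welcome clarification but not a different method.
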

\begin{proof}
By applying Theorem \ref{Th2} to the Euler polynomial $E_{2m-1}$, we obtain
$$\int_0^{\frac{\pi}{2}}E_{2m-1}\left( \frac{2}{\pi}x \right)\log( \tan x)\mathrm{d}x = 4 \sum_{k=1}^{m}\frac{(-1)^{k}}{(2\pi)^{2k-1}}\zeta_h(2k)E_{2m-1}^{(2k-2)}(0). $$
For each $1\le k \leq m$, we have
\begin{align*}
E_{2m-1}^{(2k-2)}(0) &= \frac{(2m-1)!}{(2m-2k+1)!} E_{2m-2k+1}(0)\\ &= \frac{4(2m-1)!(-1)^{m-k+1}}{\pi^{2m-2k+2}}\left(1 - \frac{1}{2^{2m-2k+2}} \right)\zeta(2m-2k+2).
\end{align*}
It follows that
\begin{equation}
L\left( E_{2m-1}\left( \frac{2}{\pi} \cdot \right)\right) = \frac{16(-1)^{m-1}(2m-1)!}{(2\pi)^{2m+1}} \sum_{k=1}^m \zeta_h(2k)\left(2^{2m-2k+2}-1 \right)\zeta(2m-2k+2).
\label{L(Em)}
\end{equation}
On the other hand, by \eqref{L(P)} (see \cite[Th. 1]{Ela}), we have
$$\int_0^{\frac{\pi}{2}}E_{2m-1}\left( \frac{2}{\pi}x \right)\log( \tan x)\mathrm{d}x = \frac{2(-1)^{m-1}(2m-1)!}{\pi^{2m-1}}\left( 1 - \frac{1}{2^{2m+1}}\right)\zeta(2m+1) .$$
Combining this equation with \eqref{L(Em)}, we complete the proof of Corollary~\ref{coro3}.
\end{proof}

It is well known that $\zeta(2n) = r_n \pi^{2n}$ ($n \in \mathbb{N}_0$), where $r_n = (-1)^{n+1}B_{2n}2^{2n-1}/(2n)!$ is a rational number. However, no such representation in terms of $\pi$ is known for the zeta function at odd arguments. It is conjectured that the number $\zeta(2n+1)/\pi^{2n+1}$ is transcendental for every integer $n \in \mathbb{N}$. In view of Corollary \ref{coro2} and Corollary \ref{coro3}, we can deduce that the numbers $\zeta(2n+1)$ and $\zeta_h(2m)$, where $n$ and $m$ are positive integers,  have the ``same" algebraic properties. This means that if $\mathcal{A}$ denotes the set of all algebraic numbers in $[0,1]$, we have $\zeta(2n+1)/\pi^{2n+1} \in \mathcal{A}$ for all $n \in \mathbb{N}$ if and only if $\zeta_h(2m)/\pi^{2m+1} \in \mathcal{A}$ for all $ m \in \mathbb{N}$. In addition, if we let
$$\alpha_n := \frac{\zeta(2n+1)}{\pi^{2n+1}}  \qquad\text{and} \qquad \beta_n:=\frac{\zeta_h(2n)}{\pi^{2n+1}} \qquad (n \geq 1),$$
then we have, by Corollary \ref{coro2},
$$\beta_n = -\frac{1}{2}\sum_{k=1}^n\left(2^{2k+1}-1\right) r_{n-k}\alpha_k \qquad (n \in \mathbb{N}) $$
and, by Corollary \ref{coro3},
$$\alpha_n = \frac{8}{2^{2n+1}-1}\sum_{k=1}^n \left( 2^{2n-2k+2}-1\right) r_{n-k+1} \beta_k \qquad (n \geq 1). $$
Furthermore, for any integer $n\geq 2,$ we have the following recursive formula
$$\sum_{k=1}^{n-1} \left(4\left(2^{2n-2k+1} -1 \right)r_{n-k+1}\beta_k - \left(2^{2k+1} - 1 \right)r_{n-k} \alpha_k \right) = 0. $$

Now, by \eqref{Brn} and the fact that
$$\frac{z e^{xz}}{e^z -1} = \sum_{n = 0}^\infty B_n(x) \frac{z^n}{n!} \qquad (|z|<2\pi),$$
we have
\begin{align*}
\int_0^{\frac{\pi}{2}} e^{2xz} \log( \tan x)\mathrm{d}x &= \frac{e^{\pi z}-1}{\pi z}  \sum_{n = 1}^\infty \frac{(\pi z)^n}{n!}\int_0^{\frac{\pi}{2}}B_n\left( \frac{2}{\pi}x\right) \log( \tan x)\mathrm{d}x \\ &= \frac{e^{\pi z}-1}{\pi z} \sum_{n = 1}^\infty \frac{(-1)^{n-1}}{4^{n-1}} \zeta_h(2n) z^{2n-1}
\end{align*}
for all $|z|<2$. We compare this last equation with \cite[p. 11]{Ela} to obtain
$$\sum_{n = 1}^\infty (-1)^{n-1}\left( 1 - \frac{1}{2^{2n+1}}\right)\zeta(2n+1) z^{2n-1} = \frac{1}{\pi} \tanh\left( \frac{\pi}{2}z\right) \sum_{n = 1}^\infty \frac{(-1)^{n-1}}{4^{n-1}} \zeta_h(2n) z^{2n-2}. $$
for all $|z|<1$, Therefore, if we put $z = i2x/\pi,$ we have
\begin{equation}
\tan x = \frac{S(x)}{C(x)}\qquad \left(|x| < \frac{\pi}{2}\right),
\label{tan}
\end{equation}
where
$$S(x) := \frac{1}{4}\sum_{n = 1}^\infty \left( 2^{2n+1}-1\right)\alpha_n x^{2n-1} \qquad \text{and} \qquad C(x) := \sum_{n = 1}^\infty \beta_n x^{2n-2}.$$
Note that we can also deduce Corollary \ref{coro2} and Corollary \ref{coro3} directly from \eqref{tan}.

Clearly, the integral $\int_0^{\pi/2}e^{2xz} \log ( \tan x)\mathrm{d}x$
exists for any complex number $z.$ Moreover, we have
\begin{equation}
 \int_0^{\frac{\pi}{2}}e^{2xz} \log ( \tan x) \mathrm{d}x = \frac{e^{\pi z} - 1}{\pi} \sum_{n= 1}^\infty \frac{h_n}{n^2 + ( \frac{z}{2})^2} \qquad (z \in \mathbb{C}).
 \label{expz}
\end{equation}
The case that $z= 2ik$ ($k \in \mathbb{Z}$) is included as $z$ tends to $2ik$; that is,
$$\int_0^{\frac{\pi}{2}}e^{4ikx} \log ( \tan x) \mathrm{d}x =  \lim_{z \to 2ik} \left(\frac{e^{\pi z} - 1}{\pi}\right) \sum_{n= 1}^\infty \frac{h_n}{n^2 + ( \frac{z}{2})^2} = - i \frac{h_{|k|}}{k}.$$
The proof of \eqref{expz} is based on the expansion of the function $x\mapsto \exp(2xz)$ as in \eqref{Expf}; namely,
$$\frac{\exp(2xz)}{e^{\pi z} - 1} = \frac{\sqrt{2}}{\pi z} + \frac{z}{2\pi}\sum_{n = 1}^\infty \frac{\cos(4nx)}{n^2 + ( \frac{z}{2})^2} - \frac{1}{\pi} \sum_{n = 1}^\infty \frac{n \sin(4nx)}{n^2 + ( \frac{z}{2})^2}.$$
Then Theorem \ref{Th1} helps complete the proof of \eqref{expz}. On the other hand, the authors showed in \cite[p. 11]{Ela} that
  \begin{align*}
  \int_0^{\frac{\pi}{2}} e^{2xz} \log ( \tan x) \mathrm{d}x &= \frac{e^{\pi z}+1}{4z} \left(\psi\left( \frac{1+iz}{2} \right)+\psi\left( \frac{1-iz}{2} \right) - 2 \psi\left( \frac{1}{2}\right) \right)\\ &= \frac{e^{\pi z}+1}{2z} \left(\psi\left( \frac{1+iz}{2} \right)- \psi\left( \frac{1}{2}\right) - i\frac{\pi}{2} \tanh\left( \frac{\pi}{2}z \right) \right),
  \end{align*}
where $\psi$ is the digamma function. Therefore, we obtain the following identity

$$\frac{\pi}{2z}\left( \psi\left( \frac{1+iz}{2} \right)- \psi\left( \frac{1}{2}\right) -  i\frac{\pi}{2} \tanh\left( \frac{\pi}{2}z \right) \right) = \tanh\left( \frac{\pi}{2} z \right) \sum_{n = 1}^\infty \frac{h_n}{n^2 + ( \frac{z}{2})^2}.$$


\section{ Analytic continuation of $\zeta_h(s)$  }

Let $s=\sigma + it$ ($\sigma, t \in \mathbb{R}$) be a complex number, we define the $h$-zeta function by
$$\zeta_h(s) := \sum_{n = 1}^\infty \frac{h_n}{n^s},$$
where $h_n:= \sum_{k=1}^n 1/(2k-1).$ Since $h_n = \log(4n)/2 + \gamma/2  + O( 1/n^2)$ as $n$ tends to $\infty,$ then $\zeta_h$ is analytic in the half-plane $\sigma >1$. On the other hand, it follows from the generating function \eqref{GF} that, for any real $x>0$,
$$\sum_{n = 1}^\infty \frac{h_n}{n} e^{-nx} = \frac{1}{4} \log^2\left( \tanh \frac{x}{4} \right). $$
Thus, for all $\sigma >1$, we have
\begin{align*}
\int_0^{\infty} \log^2( \tanh x ) x^{s-2} \mathrm{d}x &= 4\int_0^{\infty}\sum_{n = 1}^\infty \frac{h_n}{n} e^{-4nx} x^{s-2} \mathrm{d}x  \\ &= 4^{2-s}\int_0^{\infty} \sum_{n = 1}^\infty \frac{h_n}{n^s} e^{-x} x^{s-2} \mathrm{d}x \\ &= 4^{2-s} \zeta_h(s)\Gamma(s-1).
\end{align*}
That is, for all $\sigma > 1$, we have
$$\zeta_h(s) = \frac{4^{s-2}}{\Gamma(s-1)} \int_0^{ \infty} \log^2( \tanh x ) x^{s-2} \mathrm{d}x. $$
Since
$$\log^2( \tanh x) \sim 4 e^{-4x} \quad (\text{as} \ x \to \infty) $$
and
\begin{align*}
\log^2( \tanh x) &= \left( \log x + \sum_{n = 1}^\infty \left(\frac{1-2^{2n-1}}{n}\right) \left(\frac{2^{2n}B_{2n}}{(2n)!}\right)x^{2n} \right)^2 \\ &= \left(\log x + 2\sum_{n = 1}^\infty \frac{(-1)^n}{n}\left(2^{2n-1}-1\right)r_n x^{2n} \right)^2 \\ &= \log^2 x + 4 \log x \sum_{n = 1}^\infty  w_n  x^{2n} + 4\sum_{n= 2}^\infty c_n x^{2n}
\end{align*}
for all $|x|<\pi$, where
$$w_n = \frac{(-1)^n}{n}\left(2^{2n-1}-1\right)r_n \qquad (n \in \mathbb{N}) $$
and where
$$c_n = (-1)^n \sum_{k=1}^{n-1} \frac{\left(2^{2k-1}-1 \right)\left( 2^{2n-2k-1}-1\right)}{k(n-k)} r_kr_{n-k}, \qquad (n=2,3,\dots),  $$
it follows that the integral $$ \int_0^{\infty} \log^2( \tanh x ) x^{s-2} \mathrm{d}x $$
is absolutely convergent for all $\sigma > 1$. Moreover, we have
$$\int_0^1 \log^2( \tanh x ) x^{s-2} \mathrm{d}x =  \frac{2}{(s-1)^3} -4  \sum_{n = 1}^\infty \frac{w_n}{(s+2n-1)^2} + 4 \sum_{n= 2}^\infty \frac{c_n}{s+2n-1},$$
for all $\sigma > 1$. Therefore,
$$\zeta_h(s) = \frac{4^{s-2}}{\Gamma(s)}\frac{2}{(s-1)^2} - \frac{4^{s-1}}{\Gamma(s-1)}\sum_{n = 1}^\infty \frac{w_n}{(s+2n-1)^2} + \frac{4^{s-1}}{\Gamma(s-1)}\sum_{n= 2}^\infty \frac{c_n}{s+2n-1} + \frac{4^{s-2}}{\Gamma(s-1)}K(s), $$
where
$$K(s) := \int_1^{\infty} \log^2( \tanh x)x^{s-2} \mathrm{d}x .  $$
Since $K$ is a regular function, $\zeta_h(s)$ has an analytic continuation to the whole complex plane, except at the pole $s=1$ of order $2$ and at simple poles $s= 1-2n$ ($n \in \mathbb{N}$). Moreover, the residue of $\zeta_h(s)$ at $s=1$ equals $- \psi(1/2)/2= \log 2 + \gamma/{2}$, where $\gamma := \lim_{n\to \infty} (H_n - \log n)=0.57721\cdots$ denotes the Euler-Mascheroni constant. For each positive integer $n,$ the residue of $\zeta_h(s)$ at $s=1-2n$ is equal to $-w_n 4^{-2n}(2n)! =- B_{2n}( 1/2)/(4n)$. Similar to the Riemann zeta function, $\zeta_h(s)$ vanishes at the negative even integers.

It is interesting to mention that the $h$-zeta function is closely related to the Hurwitz zeta function. Their first connection is that, for all $\sigma > 1$,
$$\zeta_h(s) = \sum_{n = 1}^\infty \frac{\zeta(s,n)}{2n-1}. $$
Recall that the Hurwitz zeta function is defined for all $\sigma > 1$ and for all $x>0$ by
$$\zeta(s,x) = \sum_{n= 0}^\infty \frac{1}{(n+x)^s}, $$
and it can be extended by analytic continuation to the whole complex plane, except at the simple pole $s=1$ with residue $1$. Also, it is well known that \cite[p. 269]{Whit}, for all $\sigma<0,$
$$\zeta(s,x)= \frac{2\Gamma(1-s)}{(2\pi)^{1-s}}\left( \sin\left(\frac{\pi}{2}s \right)\sum_{n= 1}^\infty \frac{\cos(2\pi nx)}{n^{1-s}}+ \cos\left(\frac{\pi}{2}s \right)\sum_{n= 1}^\infty \frac{\sin(2\pi nx)}{n^{1-s}}\right). $$
Hence, by Theorem \ref{Th1}, we have, for all $\sigma < 0$,
$$\int_0^{\frac{\pi}{2}} \zeta\left(s, \frac{2}{\pi} x \right) \log ( \tan x ) \mathrm{d}x = - \frac{2 \Gamma(1-s)}{(2\pi)^{1-s}}\cos\left( \frac{\pi}{2}s\right) \zeta_h(2-s). $$
However, the right-hand side of the equality above exists for all $\sigma < 1$. Therefore, for each $\sigma > 1$, we have
\begin{equation}
\zeta_h(s) = \frac{(2\pi)^{s-1}}{2\Gamma(s-1)\cos( \frac{\pi}{2}s)} \int_0^{\frac{\pi}{2}}\zeta\left( 2-s, \frac{2}{\pi}x\right)\log ( \tan x) \mathrm{d}x.
\label{ACH}
\end{equation}
Note that the case when $s=2n+1$ ($n \in \mathbb{N}$) can be treated as
\begin{equation}
\zeta_h(2n+1) = \frac{(-1)^n 2^{2n} \pi^{2n-1}}{(2n-1)!} \int_0^{\frac{\pi}{2}}\zeta'\left(1-2n, \frac{2}{\pi}x\right) \log( \tan x ) \mathrm{d}x.
\label{zhodd}
\end{equation}
Note also that \eqref{zhodd} can also be shown by applying Theorem \ref{Th1} with \cite[Proposition 2]{Adam}. In fact, for any positive integer $n,$
$$\int_0^{\frac{\pi}{2}}\zeta'\left(-n, \frac{2}{\pi}x\right) \log(\tan x) \mathrm{d}x = - \int_0^{\frac{\pi}{2}}\zeta'\left(-n, 1 - \frac{2}{\pi}x\right) \log(\tan x) \mathrm{d}x,  $$
so we have
$$\int_0^{\frac{\pi}{2}}\zeta'\left(-n, \frac{2}{\pi}x\right) \log(\tan x) \mathrm{d}x = \frac{1}{2} \int_0^{\frac{\pi}{2}}\left(\zeta'\left(-n, \frac{2}{\pi}x\right) - \zeta'\left(-n, 1 - \frac{2}{\pi}x\right)\right) \log(\tan x) \mathrm{d}x.$$
It now follows from \cite[Proposition 2]{Adam} and Theorem \ref{Th1} that, for odd integers,
\begin{align*}
\int_0^{\frac{\pi}{2}}\zeta'\left(1-2n, \frac{2}{\pi}x\right) \log(\tan x) \mathrm{d}x &=  i\frac{(-1)^n(2n-1)!}{2(2\pi)^{2n-1}}\int_0^{\frac{\pi}{2}} \mathrm{Li}_{2n}\left( e^{4ix}\right)\log( \tan x ) \mathrm{d}x \\ &= \frac{(-1)^n(2n-1)!}{2(2\pi)^{2n-1}} \sum_{k = 1}^\infty\frac{h_k}{k^{2n+1}}.
\end{align*}

It is worth remarking that $\zeta_h(s)$ can be extended analytically by using \eqref{ACH}. Indeed, Let $z$ be a complex number such that $z \neq 1$. For each $0< x < 1,$ we write
$$ \zeta(z,x)=  x^{-z} + \zeta^*(z,x).$$
Then it follows from \eqref{ACH} that the integral
$$ \int_0^{\frac{\pi}{2}}\zeta\left( z, \frac{2}{\pi}x\right)\log( \tan x )\mathrm{d}x $$
exists for all $\Re z < 1$. From \cite[1.518, eq. 3, p. 53]{Jef.Zwi}, we have
$$\log( \tan x )= \log x + 2 \sum_{k=1}^{\infty} \frac{2^{2k-1} - 1}{k} \zeta(2k) \left( \frac{x}{\pi}\right)^{2k} \qquad x\in \left( 0, \frac{\pi}{2} \right),$$
so, by using integration by parts, we obtain, for all $\Re z < 1,$
$$ \int_0^{\frac{\pi}{2}} \left( \frac{2}{\pi} x\right)^{-z} \log( \tan x) \mathrm{d}x = -\frac{\pi}{2} \frac{1}{(1-z)^2} + \frac{\pi}{2} \frac{\log( \frac{\pi}{2})}{1-z} + \frac{\pi}{2} \sum_{k=1}^{\infty} \left( 1 - \frac{1}{2^{2k-1}}\right)\frac{\zeta(2k)}{k(2k+1 -z)}  .$$
However, the right-hand side of the equality above converges absolutely for any complex number $z$ not equal to odd positive integers (i.e. $z \neq 1,3,5, \dots$), and this defines an analytic continuation of the function
$$z \mapsto \int_0^{\frac{\pi}{2}} \left( \frac{2}{\pi} x\right)^{-z} \log( \tan x) \mathrm{d}x .$$
On the other hand, the function $\zeta^*(z,x)$ is defined on the half-plane $ \Re z > 1$ by
$$\zeta^*(z,x) := \sum_{n=1}^{\infty} \frac{1}{(n+x)^z} = \frac{1}{\Gamma(z)} \int_0^{\infty}\frac{e^{-y}y^{z-1}}{1-e^{-y}}e^{-xy}\,d y \qquad (0<x<1).$$
It follows that, for all $\Re z > 1,$
 $$\int_0^{\frac{\pi}{2}} \zeta^*\left( z, \frac{2}{\pi}x\right)\log( \tan x )\mathrm{d}x = \frac{1}{\Gamma(z)} \int_0^{\infty}\frac{e^{-y}y^{z-1}}{1-e^{-y}}\int_0^{\frac{\pi}{2}}e^{-\frac{2}{\pi}xy}\log( \tan x) \mathrm{d}x \,d y. $$
Then, by \eqref{expz}, we obtain
 \begin{equation}
\int_0^{\frac{\pi}{2}} \zeta^*\left( z, \frac{2}{\pi}x\right)\log( \tan x )\mathrm{d}x = - \frac{1}{\pi}\frac{1}{\Gamma(z)} \int_0^{\infty} w(y)e^{-y}y^{z-1} \,d y
\label{HZ*}
\end{equation}
for all $\Re z > 1$, where
 $$w(y) = \sum_{n = 1}^\infty \frac{h_n}{n^2 + ( \frac{y}{2\pi} )^2}. $$
Note that the right-hand side integral in \eqref{HZ*} is absolutely convergent for all $\Re z \geq 1$. Consequently, the function
$$G(z) := \int_0^{\frac{\pi}{2}} \zeta\left( z, \frac{2}{\pi}x\right)\log( \tan x )\mathrm{d}x $$
can be continued analytically to the half-plane $\Re z \geq 1$, except at the poles $z=2n-1$ ($n \in \mathbb{N}$), by
\begin{align*} G(z) = -\frac{\pi}{2} \frac{1}{(1-z)^2} & + \frac{\pi}{2} \frac{\log (\frac{\pi}{2})}{1-z} + \frac{\pi}{2} \sum_{k=1}^{\infty} \left( 1 - \frac{1}{2^{2k-1}}\right)\frac{\zeta(2k)}{k(2k+1 -z)} \\ &- \frac{1}{\pi}\frac{1}{\Gamma(z)}\int_0^{\infty} w(y)e^{-y}y^{z-1} \,d y .
\end{align*}
Finally, we can define the $h$-zeta function on the whole complex plane by
$$\zeta_h(s):= \begin{cases}\begin{array}{cl}  \sum_{n = 1}^\infty h_n/n^s & \quad\mbox{if}\quad \sigma  > 1,  \\ -2^{s-1}\pi^{s-2}\sin( \pi s/2 )\Gamma(2-s)G(2-s) &\quad\mbox{if}\quad \sigma \leq 1\quad\mbox{and}\quad s\neq 1-2n \quad (n \in \mathbb{N}_0).\end{array}\end{cases} $$


\section{Concluding Remarks}

The value distribution of the $h$-zeta function can be an interesting topic to pursue. Besides, it is important to study the algebraic aspects of the numbers $\zeta(2n+1)$ as we have shown in Corollary \ref{coro2} and Corollary \ref{coro3} that the $h$-zeta function appears in several integrals involving the Riemann zeta-function and the Hurwitz zeta-function.

For the sake of completeness, we present the following interesting example.
For each $\sigma> 1$, let
$$H(s) := -\int_0^{\infty}\log( \tanh x ) x^{s-2} \mathrm{d}x. $$
Using integration by parts, we find that
$$H(s)= \frac{4}{s-1} \int_0^{\infty} \frac{e^{-2x}x^{s-1}}{1-e^{-4x}} \mathrm{d}x, $$
which implies that, for all $\sigma > 1,$
$$H(s) =  4^{1-s} \Gamma(s-1) \zeta\left(s , \frac{1}{2} \right). $$
On the other hand, the analogue of the Parseval's formula for the Mellin transforms (see for example \cite[p. 484]{Ivic}) yields
$$\frac{1}{2\pi } \int_{ \Re s = \sigma}\left|H(s) \right|^2 |\mathrm{d}s|= \int_0^{\infty}\log^2( \tanh x )x^{2\sigma - 3} \mathrm{d}x$$
for all $\sigma > 1$, which is equivalent to
 $$\frac{1}{8\pi} \int_{\Re s = \sigma}\left|\Gamma(s-1)\zeta\left(s, \frac{1}{2} \right) \right|^2 |\mathrm{d}s| = \Gamma(2\sigma -2) \zeta_h(2\sigma -1)$$
 for all $\sigma > 1$. In particular, when $\sigma =\frac{3}{2},$ we have
 $$\zeta(3) = \frac{1}{7} \int_0^{\infty}\frac{9 - 4\sqrt{2}\cos(t\log 2)}{\cosh(\pi t)}\left|\zeta\left( \frac{3}{2} + it \right) \right|^2 \,d t . $$

We have seen that the $h$-zeta function vanishes at $s=-2n$ ($n=0,1,2,\dots$) and that $\zeta_h(2n)$ is closely related to the numbers $\zeta(2n+1)$ ($n \in \mathbb{N}$). Hence, it is natural to ask if $\zeta_h(s)$ satisfies certain functional equation. Maybe, by answering this question, one can find a closed form for the numbers $\zeta(2n+1)$.


\section*{Acknowledgements}
The authors are very grateful to the anonymous referee for her or his instruction, comment and suggestion. Also, the authors would like to express their gratitude to Dr. Poj Lertchoosakul for the endless English correction.

\end{document}